\documentclass[a4paper]{article}
\usepackage[T1]{fontenc}
\usepackage[utf8x]{inputenc}
\usepackage{url}
\usepackage{amsmath}
\usepackage{amssymb}
\usepackage{amsthm}
\usepackage[all]{xy}
\usepackage{hyperref}

\makeatletter
\newtheorem{theorem}{Theorem}
\newtheorem{lemma}{Lemma}
\newtheorem{definition}{Definition}
\newtheorem{proposition}{Proposition}
\newtheorem{remark}{Remark}

\pdfpageheight\paperheight
\pdfpagewidth\paperwidth

\newcommand{\email}[1]{\textbf{e-mail:~}\texttt{#1}}
\renewcommand{\vec}[1]{\mathbf{#1}}
\newcommand{\CC}{\mathbb{C}}
\newcommand{\m}{\mathfrak{m}}
\newcommand{\RR}{\mathbb{R}}
\newcommand{\M}{\mathcal{M}}
\newcommand{\C}{\mathcal{C}}

\newcommand{\K}{\mathbf{K}}

\newcommand{\ZZ}{\mathbb{Z}}
\newcommand{\TT}{\mathbb{T}}
\newcommand{\PG}{\mathbb{P}G}
\newcommand{\tst}{T^{*}S}

\newcommand{\PP}{\mathbb{P}}
\newcommand{\q}{\vec{q}}
\newcommand{\p}{\vec{p}}
\newcommand{\A}{\mathbf{A}}
\newcommand{\LL}{\mathbf{L}}
\newcommand{\jac}{\text{\ensuremath{\mathrm{Jac}}}}
\newenvironment{acknowledgements}{\subsubsection*{Acknowledgments}\it}{\rm}
\newcommand{\keywords}{\textbf{Keywords:} } 

\usepackage[all]{xy}

\begin{document}

\title{Algebro-geometric aspects of superintegrability: the degenerate Neumann
system}

\author{Martin Vuk\footnote{University of Ljubljana, Faculty of Computer and Information Science\newline \email{martin.vuk@fri.uni-lj.si}}}
\maketitle
\begin{abstract}
In this article we use algebro-geometric tools to describe the structure
of a superintegrable system. We study degenerate Neumann system with
potential matrix that has some eigenvalues of multiplicity greater
than one. We show that the degenerate Neumann system is superintegrable
if and only if its spectral curve is reducible and that its flow can
be linearized on the generalized Jacobians of the spectral curve.
We also show that the generalized Jacobians of the hypereliptic component
of the spectral curves are models for the minimal invariant tori of
the flow. Moreover the spectral invariants generate local actions
that span the invariant tori, while the moment maps for the rotational
symmetries provide additional first integrals of the system. Using
our results we reproduce already known facts that the degenerate Neumann
system is superintegrable, if its potencial matrix has eigenvalues
of multiplicity greater or equal than $3$.

\keywords{superintegrability -- noncommutative integrability -- Neumann system
-- generalized Jacobian -- algebraic geometry}
\end{abstract}
\textbf{Submited to:} \emph{Communications in Mathematical Physics}
\section{Introduction}

The Neumann system describes the motion of a particle constrained
to a $n$-dimensional sphere $S^{n}$ in a quadratic potential. The
potential is in ambient coordinates $\q=(q_{1},\ldots,q_{n+1})\in\RR^{n+1}$
given as a quadratic form
\[
V(\q)=\frac{1}{2}\langle\A\q,\q\rangle=\frac{1}{2}\sum_{i=1}^{n+1}a_{i}q_{i}^{2}
\]
with the potential matrix $\A=\mathrm{diag}(a_{1},\ldots a_{n+1})$
and $a_{i}>0$. We study the \emph{degenerate} case when some of the
eigenvalues of $\A$ are of multiplicity greater than 3.

In the generic case where all the eigenvalues of the potential $a_{i}$
are different, the Neumann system is algebraically completely integrable
and its flow can be linearized on the Jacobian torus of an algebraic
spectral curve \cite{Moser:CSY:1980,Adler:AdvM:1980,Mumford:TataII:1984}. 

In contrast to the generic Neumann system, the degenerate case has
not received much attention. The Liouville integrability of degenerate
Neumann case was proven by Liu in \cite{Liu1992}. We have
used the methods of algebraic geometry to study the non superintegrable
case when multiplicity of the eigenvalues is less or equal 2 \cite{Vuk:JPA:2008}.
The reduction of the degenerate Neumann system to the Rosochatious
system was recently explored by Dullin and Han{\ss}mann in \cite{Dullin2012}. 

In this paper we study the degenerate case with the multiplicity of
some eigenvalues $\ge3$. We show that the degenerate Neumann system
is superintegrable if the spectral curve is reducible and that its
flow can be linearized on the generalized Jacobian of a singular hypereliptic
component of the spectral curve. We show that the generalized Jacobian
of the hypereliptic component is a model for minimal invariant tori
of the flow, which are of smaller dimension in superintegrable case.
Another difference between generic and superintegrable case is that
the spectral invariants do not generate all of the first integrals,
but only the actions that generate the invariant tori. 

To linearize the flow, we will use $(n+1)\times(n+1)$ Lax equation,
where the resulting spectral curve is singular in the degenerate case.
Following the standard procedure and normalizing the spectral curve
results in the loss of several degrees of freedom and in fact corresponds
to the symplectic reduction by the additional symmetries in the degenerate
case\cite{Vuk:JPA:2008,Dullin2012}. In order to avoid this, we will
use the generalized Jacobian of the singular spectral curve to linearize
the flow as in \cite{Gavrilov:MathZft:2002,Vivolo:EdMathSoc:2000,Vuk:JPA:2008}.
The generalized Jacobian is an extension of the ``ordinary'' Jacobian
by a commutative algebraic group (see \cite{Serre:Springer:ClassFields}
for more detailed description). 

One should also mention that apart from classical case there has been
a lot of interest in quantum case \cite{Macfarlane:NPB:1992,Babelon:NPB:1992,Harnad:CMP:1995}
for both Neumann and its reduction Rosochatius system. Our study should
provide additional insight into the superintegrable nature of quantum
degenerate Neumann system as well.

After the introduction, Lax representation and spectral curve of the
degenerate Neumann system are discussed in section \ref{sec:Degenerate-C.-Neumann}.
In subsection  \ref{sec:Invariants-of-the} we study invariant manifolds
of the isospectral flow. Our main result is formulated in theorem
\ref{thm:main} and states that the minimal invariant tori of degenerate
Neumann system are isomorphic to the real parts of the generalized
Jacobian of the singular hypereliptic component of the spectral curve
modulo some discrete group action.

\section{Degenerate Neumann system\label{sec:Degenerate-C.-Neumann}}

The Neumann system is a Hamiltonian system on the cotangent bundle
of the sphere $T^{*}S^{n}$ with the standard symplectic form. The
Hamiltonian can be written in ambient coordinates $(\q,\p)\in T^{*}S^{n}=\{(\q,\p)\in\RR^{n+1}\times\RR^{n+1};\quad\langle\q,\q\rangle=1\wedge\langle\p,\q\rangle=0\}$
as
\[
H=\frac{1}{2}\langle\p,\p\rangle+V(\q)=\frac{1}{2}\sum_{i=1}^{n+1}p_{i}^{2}+\frac{1}{2}\sum_{i=1}^{n+1}a_{i}q_{i}^{2}
\]
and the equations of motion can be written in Hamiltonian form
\begin{eqnarray}
\dot{\q} & = & \p\label{eq:hamiltonian-system}\\
\dot{\p} & = & -\A\q+\varepsilon\q\nonumber 
\end{eqnarray}
 where $\varepsilon=\left\Vert \p\right\Vert ^{2}+c$ is chosen so
that $\left\Vert \q\right\Vert =1$ and the particle stays on the
sphere.
\begin{definition}
We will call the Neumann system \emph{degenerate} if some of the eigenvalues
of $\A$ are of multiplicity grater than 1. 
\end{definition}
We assume throughout this paper that $\A$ has $k$ different eigenvalues
\[
\{a_{1},\ldots a_{k}\}
\]
 with multiplicities $\{m_{1},\ldots m_{k}\}$ and we denote by $r$
the number of eigenvalues with multiplicity grater than one. 

It is known that degenerate Neumann system is superintegrable if $m_{j}\ge3$
for some $j$ and is not superintegrable if all $m_{j}\le2$. Its
structure is well described in \cite{Vuk:JPA:2008} for the case of
all $m_{j}\le2$ and in \cite{Dullin2012} for the general case.

\subsection{Isospectral flow and spectral curve}

A standard approach to study integrable systems is by writing down
the system in the form of Lax equation, which describes the flow of
matrices or matrix polynomials with constant eigenvalues, i.e. the
isospectral flow \cite{Adler:AdvM:1980,audin:CUP:1999}. Eigenvalues
of the isospectral flow are the first integrals of the integrable
system and Lax representation maps Arnold-Liouville tori into the
real part of the isospectral manifold consisting of matrices with
the same spectrum. A quotient of the isospectral manifold by a suitable
gauge group is in turn isomorphic to the open subset of the Jacobian
of the spectral curve \cite{Beauville:Acta:1990}. 

We extend this approach to cover special properties of the superintegrabile
systems. The isospectral flow for Neumann system can be written in
the form of a differential equation on the space of matrix polynomials
of degree $2$
\begin{equation}
\frac{d}{dt}\LL(\lambda;t)=[\LL(\lambda;t),(\lambda^{-1}\LL(\lambda;t))_{+}]\label{eq:Lax}
\end{equation}
and with the initial condition of the special form
\begin{equation}
\LL_{0}(\lambda)=\lambda^{2}\A+\lambda\q\wedge\p-\q\otimes\p\label{eq:Lax_form}
\end{equation}
for $\q,\p\in\RR^{n+1}$ such that $||\q||=1$ and $\langle\q,\p\rangle=0$.
\begin{remark}
Two different Lax equations are known for a generic Neumann system
with $n$ degrees of freedom: one is using $(n+1)\times(n+1)$ matrix
polynomials of degree $2$ \cite{Adler:AdvM:1980} and the other is
using $2\times2$ matrix polynomials of degree $n$ \cite{Mumford:TataII:1984,Beauville:Acta:1990}.
Both of the Lax representations can be used to describe degenerate
case, the $(n+1)\times(n+1)$ Lax equation make it easier to investigate
the superintegrable properties of the system.\end{remark}
\begin{lemma}
\label{lem:The-solution-of}The solution of (\ref{eq:Lax})
with initial condition of the form (\ref{eq:Lax_form}) is of the
same form 
\begin{equation}
\LL(\lambda;t)=\lambda^{2}\A+\lambda\q(t)\wedge\p(t)-\q(t)\otimes\q(t)\label{eq:Lax_form_t}
\end{equation}
for all time $t\in\RR$. Moreover the vectors $\q(t),\p(t)\in\RR^{n+1}$
satisfy $||\q(t)||=1$ and $\langle\q(t),\p(t)\rangle=0$ and satisfy
Hamiltonian equations for Neumann system.
\end{lemma}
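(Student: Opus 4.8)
The statement has two parts: (i) the Lax form $\LL(\lambda;t)=\lambda^2\A+\lambda\,\q(t)\wedge\p(t)-\q(t)\otimes\q(t)$ is preserved under the flow (\ref{eq:Lax}), and (ii) under this ansatz the Lax equation is equivalent to the Hamiltonian equations (\ref{eq:hamiltonian-system}) together with the constraints $\|\q\|=1$, $\langle\q,\p\rangle=0$. I would prove both at once by substituting the time-dependent ansatz (\ref{eq:Lax_form_t}) into (\ref{eq:Lax}) and comparing coefficients of powers of $\lambda$. The plan is to first compute the right-hand side $[\LL,(\lambda^{-1}\LL)_+]$ explicitly. Here $\lambda^{-1}\LL = \lambda\A + \q\wedge\p - \lambda^{-1}\q\otimes\q$, so its polynomial part is $(\lambda^{-1}\LL)_+ = \lambda\A + \q\wedge\p$. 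Then the commutator is $[\LL,\lambda\A + \q\wedge\p]$, which is a matrix polynomial; I would expand it using bilinearity and the elementary commutator identities for rank-one and rank-two pieces, e.g. $[\vec u\otimes\vec v,\A]$, $[\vec u\wedge\vec v,\A]$, and $[\vec u\wedge\vec v,\vec w\wedge\vec z]$, keeping careful track of which terms are symmetric and which are antisymmetric.

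Next I would equate this with $\frac{d}{dt}\LL = \lambda^2\dot\A + \lambda\,\frac{d}{dt}(\q\wedge\p) - \frac{d}{dt}(\q\otimes\q)$, using $\dot\A=0$. Matching the coefficient of $\lambda^3$ should give $[\A,\text{(something)}]$-type terms that vanish automatically or force no new condition; matching $\lambda^2$ should yield (after using the antisymmetry) exactly $\dot\q\wedge\p + \q\wedge\dot\p$ against $[\q\wedge\p,\A] + [\A,\q\otimes\q\text{-type}]$ contributions — this is where the equation $\dot\q=\p$ will emerge, and where the term $\A\q$ appears. The coefficient of $\lambda^1$ should produce the equation for $\dot\p$, namely $\dot\p = -\A\q + \varepsilon\q$ with $\varepsilon$ a scalar that I would read off as $\varepsilon = \langle\A\q,\q\rangle + \langle\p,\p\rangle$ (matching the known form $\varepsilon = \|\p\|^2+c$), using that $\q\wedge\q = 0$ kills any $\q\otimes\q$-proportional ambiguity in the antisymmetric part. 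The constant ($\lambda^0$) coefficient gives $\frac{d}{dt}(\q\otimes\q) = \dot\q\otimes\q + \q\otimes\dot\q$, which must match a symmetric rank-two expression from the commutator; consistency here should be automatic once $\dot\q=\p$ is known, but it also encodes that the ansatz's symmetric/antisymmetric block structure is closed under the flow.

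For the constraints: differentiating $\|\q\|^2$ gives $2\langle\q,\dot\q\rangle = 2\langle\q,\p\rangle$, and differentiating $\langle\q,\p\rangle$ gives $\langle\p,\p\rangle + \langle\q,\dot\p\rangle = \|\p\|^2 + \langle\q,-\A\q+\varepsilon\q\rangle$. With the above choice of $\varepsilon$ this vanishes, so $\langle\q,\p\rangle$ is conserved, and then $\langle\q,\p\rangle=0$ at $t=0$ forces it for all $t$; similarly $\|\q\|^2$ is then constant, hence $\equiv 1$. Thus the ansatz is self-consistent and the constraints propagate. Alternatively, and more cleanly, I could invoke the general principle that solutions of Lax equations $\dot\LL=[\LL,B]$ are conjugations $\LL(t)=g(t)\LL_0 g(t)^{-1}$, so $\LL(t)$ stays in the orbit of $\LL_0$; the point then is to check that this orbit is exactly cut out by the stated ansatz — but the coefficient-matching argument is more direct and simultaneously extracts Hamilton's equations.

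The main obstacle I expect is purely bookkeeping: organizing the commutator $[\LL,\lambda\A+\q\wedge\p]$ by degree in $\lambda$ and by symmetry type without sign errors, and correctly identifying the scalar $\varepsilon$ from the $\lambda^1$ coefficient (in particular making sure the $\q\otimes\q$ contributions to the antisymmetric part cancel so that $\dot\p+\A\q$ is indeed proportional to $\q$). There is no deep difficulty — the content is that the rather rigid ansatz (\ref{eq:Lax_form}) is an invariant submanifold of the isospectral flow on degree-2 matrix polynomials, and that the induced flow on it is Neumann's.
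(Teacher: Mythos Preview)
Your approach is exactly the paper's: substitute the ansatz into the Lax equation, compute $(\lambda^{-1}\LL)_+=\lambda\A+\q\wedge\p$, expand the commutator, match powers of $\lambda$, and invoke uniqueness for ODEs. The paper in fact stops one step short of you, merely asserting that the resulting equations for $\tfrac{d}{dt}(\q\wedge\p)$ and $\tfrac{d}{dt}(\q\otimes\q)$ ``are satisfied if $\q(t),\p(t)$ are solutions to the Neumann problem,'' whereas you plan to extract $\dot\q=\p$, $\dot\p=-\A\q+\varepsilon\q$, and the propagation of the constraints explicitly.

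One bookkeeping slip worth flagging, since you already identified bookkeeping as the main hazard: your degree labels are off by one. On the left you correctly wrote $\dot\LL=\lambda^2\dot\A+\lambda\,\tfrac{d}{dt}(\q\wedge\p)-\tfrac{d}{dt}(\q\otimes\q)$, but then you say the $\lambda^2$ coefficient yields $\dot\q\wedge\p+\q\wedge\dot\p$ and the $\lambda^1$ coefficient yields $\dot\p$. In fact the $\lambda^3$ and $\lambda^2$ coefficients of the commutator both vanish identically (the $\lambda^2$ piece is $[\A,\q\wedge\p]+[\q\wedge\p,\A]=0$), the $\lambda^1$ coefficient gives $\tfrac{d}{dt}(\q\wedge\p)=-[\q\otimes\q,\A]=\q\wedge(-\A\q)$, and the $\lambda^0$ coefficient gives $\tfrac{d}{dt}(\q\otimes\q)=-[\q\otimes\q,\q\wedge\p]$. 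So $\dot\q=\p$ actually comes from the constant term, and then $\dot\p=-\A\q+\varepsilon\q$ from the linear term. With that correction your plan goes through verbatim.
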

\begin{proof}
We will show, that the path of the form (\ref{eq:Lax_form_t}) can
satisfy the Lax equation if $(\q(t),\p(t))$ are the solutions of
the Neumann equations. Lemma follows from the uniqueness for the solution
of the ordinary differential equations. Let's put a curve of the form
(\ref{eq:Lax_form_t}) into the Lax equation 
\begin{eqnarray*}
\frac{d}{dt}(\lambda^{2}\A+\lambda\q\wedge\p-\q\otimes\q) & = & [\lambda^{2}\A+\lambda\q\wedge\p-\q\otimes\q,\lambda\A+\q\wedge\p]\\
 & = & 0\cdot\lambda^{3}+0\cdot\lambda^{2}-[\q\otimes\q,\A]\lambda-[\q\otimes\q,\q\wedge\p].
\end{eqnarray*}
and compare the terms at powers of $\lambda$
\begin{eqnarray*}
\frac{d\A}{dt} & = & 0\\
\frac{d\q\wedge\p}{dt} & = & -[\q\otimes\q,\A]\\
\frac{d\q\otimes\q}{dt} & = & -[\q\otimes\q,\q\wedge\p].
\end{eqnarray*}
The leading coefficient $\A$ does not change and one can see directly
that the second and third equations are satisfied if $\q(t),\p(t)$
are solutions to the Neumann problem. From the uniqueness theorem
for ordinary differential equations it follows that the isospectral
flow $\LL(\lambda;t)$ is always of the form(\ref{eq:Lax_form_t}).
\end{proof}
Once we have an isospectral flow, its invariants can be described
in terms of the characteristic polynomial and isospectral subsets
of matrix polynomials. Indeed if the flow is isospectral, the characteristic
polynomial 
\[
P(\lambda,\mu)=\det(\LL(\lambda)-\mu)
\]
does not change along the flow. The characteristic polynomial is therefore
a conserved object of the flow (its coefficients and also zeros are
conserved quantities). The \emph{affine spectral curve} of the flow
(\ref{eq:Lax}) is defined as the set of points $(\lambda,\mu)\in\CC\times\CC$
that satisfy the characteristic equation
\[
\det(\LL(\lambda)-\mu)=0
\]
and is also conserved along the flow. We can complete the affine spectral
curve to obtain a compact Riemann surface. To obtain maximal smoothness
the affine spectral curve is embedded into the surface $\mathcal{O}(2)$
given by two patches of $\CC^{2}$ glued together by the coordinate
change
\[
(\lambda,\mu)\mapsto\left(\frac{1}{\lambda},\frac{\mu}{\lambda^{2}}\right).
\]
We denote the latter coordinate by $z=\frac{\mu}{\lambda^{2}}$.
\begin{definition}
\emph{The spectral curve }of the flow (\ref{eq:Lax}) is the compactification
of the affine spectral curve in the surface $\mathcal{O}(2)$. We
denote it by $\C$. 
\end{definition}
There is an important relation between singularities of the spectral
curve and regularity of the matrix $\LL(\lambda)$. 
\begin{definition}
We will call a matrix $\A\in\CC^{n\times n}$ \emph{regular }if all
of its eigenvalues have one-dimensional eigenspace. Equivalently $\A$
is regular if all the eigenvalues have the geometric multiplicity
equal to one.\emph{ }
\end{definition}
It is easy to see, that if the matrix $\LL(\lambda_{0})$ is not regular,
then the curve $\C$ has a singularity in at least one of the points
of $\lambda^{-1}(\lambda_{0})$. More precisely if $\mu_{0}$ is an
eigenvalue of $\LL(\lambda_{0})$ with eigenspace of dimension grater
than one, then the point $(\lambda_{0},\mu_{0})\in\C$ is singular
(see \cite{Gavrilov:MathZft:2002} for proof).

\subsection{Spectral curve for degenerate Neumann system }

As the important singularities occur at $\infty$, we will calculate
the affine curve in coordinates $(\lambda,z)=(\lambda,\frac{\mu}{\lambda^{2}})$.
The Lax matrix at infinity is given as a rank 2 perturbation of the
matrix $\A$
\[
\lambda^{-2}\LL(\lambda)=\A+\lambda^{-1}\q\wedge\p-\lambda^{-2}\q\otimes\q.
\]
To obtain the equation of the spectral curve, we follow \cite{Moser:CSY:1980,Liu1992}
and use Weinstein-Aronszajn determinant as follows 
\[
\frac{\det(z-\lambda^{-2}\LL(\lambda))}{\det(z-\A)}=1-\Phi_{z}(\q,\p),
\]
where $\Phi_{z}(\q,\p)=-\lambda^{-2}Q_{z}(\q)-\lambda^{-2}(Q_{z}(\q)Q_{z}(\p)-Q_{z}^{2}(\p,\q))$
and
\begin{eqnarray*}
Q_{z}(\vec{x},\vec{y}) & = & \langle(z-\A)^{-1}\vec{x},\vec{y}\rangle\\
Q_{z}(\vec{x}) & = & Q_{z}(\vec{x},\vec{x})
\end{eqnarray*}
for any $\vec{x},\vec{y}\in\mathbb{R}^{n+1}$. If we write $\Phi_{z}$
in co-ordinates
\begin{eqnarray*}
-\lambda^{2}\Phi_{z} & = & Q_{z}(\q)+Q(\q)Q_{z}(\p)-Q_{z}^{2}(\p,\q)\\
 & = & \sum\frac{q_{i}^{2}}{z-a_{i}}+\sum\frac{q_{i}^{2}}{z-a_{i}}\sum\frac{p_{i}^{2}}{z-a_{i}}-\left(\sum\frac{p_{i}q_{i}}{z-a_{i}}\right)^{2}
\end{eqnarray*}
 we can write $\det(z-\lambda^{-2}\LL(\lambda))=\det(z-\A)(1-\Phi_{z})$
and 
\begin{multline*}
\det(\mu-\LL)=\lambda^{2(n+1)}\det(z-\lambda^{-2}\LL)=\\
=\lambda^{2n+2}\prod(z-a_{i})\left(1+\lambda^{-2}(Q_{z}(\q)-Q_{z}(\q)Q_{z}(\p)+Q_{z}^{2}(\q,\p))\right)\\
=\lambda^{2n}\prod(z-a_{i})\left(\lambda^{2}+Q_{z}(\q)-Q_{z}(\q)Q_{z}(\p)+Q_{z}^{2}(\q,\p)\right).
\end{multline*}
Let us assume, that $\A$ is diagonal and has eigenvalues $a_{i}$,
$i=1,\ldots,k$ with multiplicities $m_{i}\ge1$. Denote by $V_{i}\subset\{1,\ldots,n+1\}$
a subset of indexes for which the diagonal element of $\A$ is $a_{i}$.
Denote by 
\[
||\q||_{i}^{2}=\sum_{j\in V_{j}}q_{j}^{2},\quad\langle\q,\p\rangle_{i}=\sum_{j\in V_{j}}q_{j}p_{j}\text{ and }||\p||_{i}^{2}=\sum_{j\in V_{j}}p_{j}^{2}.
\]
Then $\Phi_{z}$ can be written as
\begin{eqnarray*}
-\lambda^{2}\Phi_{z}(\q,\p) & = & \sum_{i=1}^{k}\frac{||\q||_{i}^{2}}{z-a_{i}}+\sum_{i=1}^{k}\frac{||\q||_{i}^{2}}{z-a_{i}}\sum_{i=1}^{k}\frac{||\p||_{i}^{2}}{z-a_{i}}-\left(\sum_{i=1}^{k}\frac{\langle\q,\p\rangle_{i}^{2}}{z-a_{i}}\right)^{2}\\
 & = & \sum_{i=1}^{k}\frac{F_{i}}{z-a_{i}}+\sum_{m_{j}>1}\frac{K_{j}^{2}}{(z-a_{j})^{2}}\\
 & = & \frac{Q_{\q,\p}(z)}{\prod_{i,m_{i}=1}(z-a_{i})\prod_{j,m_{j}>1}(z-a_{j})^{2}}
\end{eqnarray*}
for a polynomial $Q_{\q,\p}(z)$ of degree $k-1+r$, where $r$ is
the number of eigenvalues with multiplicity $>1$. If the multiplicity
of an eigenvalue $a_{i}$ is 1, the terms with $(z-a_{i})^{2}$ cancel,
if on the other hand, the multiplicity is greater than one, the quadratic
term remains. We have obtained a set of commuting first integrals
$F_{i}$ for $i=1,\ldots,k$ and $K_{j}^{2}$ for $m_{j}>1$ for the
degenerate Neumann system, which were first described in \cite{Liu1992}.
The integrals $F_{i}$ are known as Uhlenbeck's integrals for non-degenerate
case, while the integrals $K_{j}$ are total angular momenta 
\[
K_{j}=\sum_{v,u\in V_{j}}(q_{v}p_{u}-q_{u}p_{v})^{2}
\]
for the action of the symmetry group of rotations $O(m_{j})$ acting
on the eigenspace of $\A$ for the eigenvalue $a_{j}$. Moreover 
\[
\sum F_{i}=1
\]
and Hamiltonian $H$ can be written as
\[
H=\sum a_{i}F_{i}+\frac{1}{2}\sum K_{j}^{2}.
\]
The number of first integrals we have obtained from the spectral curve
is $k-1+r<n$ and is smaller than the number of degrees of freedom.
There are more integrals (arising from the actions of the orthogonal
group $O(m_{j})$), which do not appear in the algebraic picture.
Taking $\mu=\lambda^{2}z$, the spectral polynomial becomes
\begin{multline}
\det(\mu-\LL)=\lambda^{2n}\prod_{i=1}^{k}(z-a_{i})^{m_{i}}\left(\lambda^{2}+\frac{Q_{\q,\p}(z)}{\prod(z-a_{i})\prod(z-a_{j})^{2}}\right)\\
=\lambda^{2n}\prod_{i,m_{i}>1}(z-a_{i})^{m_{i}-2}\left(\lambda^{2}\prod_{m_{i}=1}(z-a_{i})\prod_{m_{i}>1}(z-a_{j})^{2}+Q_{\q,\p}(z)\right).\label{eq:spectral_poly}
\end{multline}

\begin{proposition}
The spectral curve $\C$ is an union of the following components\end{proposition}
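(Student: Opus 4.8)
The plan is to read the decomposition directly off the factorisation of the characteristic polynomial already obtained in~(\ref{eq:spectral_poly}). The one point that needs care is that~(\ref{eq:spectral_poly}) is written in the mixed variables $(\lambda,z)$ with $z=\mu\lambda^{-2}$, so the prefactor $\lambda^{2n}$ displayed there is an artefact of that substitution and not a genuine component: the line $\{\lambda=0\}$ cannot lie on $\C$, since $\LL(0)=-\q\otimes\q$ has characteristic polynomial $\mu^{n}(\mu+1)\not\equiv 0$. So the first step is to restore honest affine coordinates. Writing $z-a_{i}=(\mu-a_{i}\lambda^{2})\lambda^{-2}$ and $Q_{\q,\p}(z)=\lambda^{-2(k+r-1)}\widetilde{Q}(\mu,\lambda)$, where $\widetilde{Q}(\mu,\lambda):=\lambda^{2(k+r-1)}Q_{\q,\p}(\mu\lambda^{-2})$ is a genuine polynomial because $\deg Q_{\q,\p}=k-1+r$, and substituting into~(\ref{eq:spectral_poly}), a bookkeeping of the powers of $\lambda$ --- using $\sum_{i}m_{i}=n+1$, equivalently $2n=2\sum_{m_{i}>1}(m_{i}-2)+2(k+r-1)$ --- shows that every negative power of $\lambda$ cancels and one is left with the identity of honest polynomials
\[
\det\bigl(\mu I-\LL(\lambda)\bigr)=\prod_{m_{i}>1}(\mu-a_{i}\lambda^{2})^{\,m_{i}-2}\;\widetilde{R}(\mu,\lambda),\qquad
\widetilde{R}:=\prod_{m_{i}=1}(\mu-a_{i}\lambda^{2})\prod_{m_{j}>1}(\mu-a_{j}\lambda^{2})^{2}+\widetilde{Q}(\mu,\lambda).
\]

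From this the affine spectral curve $\{\det(\mu I-\LL)=0\}\subset\CC^{2}$ is the union of the curves $\ell_{j}:=\{\mu=a_{j}\lambda^{2}\}$, one for each $j$ with $m_{j}>1$, the $j$-th counted with multiplicity $m_{j}-2$ (so only eigenvalues of multiplicity $\ge 3$ actually contribute a component), together with the affine curve $\{\widetilde{R}=0\}$. Passing to the compactification in $\OO(2)$: each closure $\overline{\ell_{j}}$ is a smooth rational curve --- in the chart $(w,z)=(\lambda^{-1},\mu\lambda^{-2})$ it is just the line $\{z=a_{j}\}$ --- and $\Gamma:=\overline{\{\widetilde{R}=0\}}$ is the hyperelliptic component: in the chart $(\lambda,z)$ it is cut out by the factor $\lambda^{2}+Q_{\q,\p}(z)/(\prod_{m_{i}=1}(z-a_{i})\prod_{m_{j}>1}(z-a_{j})^{2})$ already visible in~(\ref{eq:spectral_poly}), hence it is the double cover of the $z$-sphere given by
\[
\lambda^{2}=-\,\frac{Q_{\q,\p}(z)}{\prod_{m_{i}=1}(z-a_{i})\prod_{m_{j}>1}(z-a_{j})^{2}},
\]
branched over the odd-order zeros and poles of the right-hand side, i.e. a (possibly singular, possibly rational) hyperelliptic curve. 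The same local computation in the chart $(w,z)$ shows that the fibre $\{\lambda=\infty\}$ is not itself a component and that $\Gamma$ and the $\ell_{j}$ meet it only over the values $z=a_{i}$.

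Putting this together gives $\C=\Gamma\cup\bigcup_{j:\,m_{j}\ge 3}\ell_{j}$, equivalently the divisor identity $\C=\Gamma+\sum_{m_{j}>1}(m_{j}-2)\,\ell_{j}$, which is the asserted list of components; since the displayed factorisation is an \emph{equality} of polynomials the list is complete. In particular $\C$ is reducible precisely when some $m_{j}\ge 3$, which is the superintegrability dichotomy recorded above.

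I expect the only step requiring genuine care to be the exponent bookkeeping in the first paragraph: one must check that the $\lambda^{2n}$ in~(\ref{eq:spectral_poly}) is exactly absorbed by the $\lambda^{-2}$ concealed in each factor $z-a_{i}$ and in $Q_{\q,\p}(z)$, so that neither the line $\{\lambda=0\}$ nor the fibre $\{\lambda=\infty\}$ is introduced as a spurious component; the identity $2n=2\sum_{m_{i}>1}(m_{i}-2)+2(k+r-1)$ is exactly what makes this work and deserves to be written out. Everything after that is reading components off a known factorisation together with a routine local check over $\lambda=\infty$.
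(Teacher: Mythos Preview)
Your argument is correct and follows the same route as the paper: both simply read the components off the factorisation~(\ref{eq:spectral_poly}), and the paper in fact offers no proof beyond displaying that formula. Your version is more careful on two points where you actually depart from the paper's stated list. First, you verify via the identity $2n=2\sum_{m_i>1}(m_i-2)+2(k+r-1)$ that the prefactor $\lambda^{2n}$ is entirely absorbed when one returns to honest $(\lambda,\mu)$ coordinates, so the line $\{\lambda=0\}$ is not a component; the paper, by contrast, lists ``a point $\lambda=0,\mu=0$'' as a component, which your computation shows is merely the common intersection point of the parabolas and the hyperelliptic piece, not a separate irreducible component. Second, you observe that the parabola $\mu=a_j\lambda^{2}$ enters with exponent $m_j-2$ and hence contributes a genuine component only for $m_j\ge 3$, sharpening the paper's condition ``$m_i>1$'' (which is vacuous at $m_i=2$). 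Both refinements are correct, and your concluding remark that $\C$ is reducible precisely when some $m_j\ge 3$ is the right way to phrase the superintegrability dichotomy.
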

\begin{enumerate}
\item a point $\lambda=0,\mu=0$
\item a collection of parabolas
\[
\mu=a_{i}\lambda^{2},
\]
coming from $(z-a_{i})=0$, for $i$ such that $m_{i}>1$.
\item a singular hypereliptic curve $\C_{h}$
\[
\lambda^{2}=\frac{Q_{\q,\p}(z)}{\prod_{i,m_{i}=1}(z-a_{i})\prod_{j,m_{j}>1}(z-a_{j})^{2}}
\]
with the arithmetic genus $g_{a}=k+r-1$.
\end{enumerate}
We see that all of the first integrals $F_{i}$ an $K_{j}$ coming
from the spectral curve are encoded in the hypereliptic part $\C_{h}$.
Also note, that the genus of the parabolas $\mu=a_{i}\lambda^{2}$
is $0$. This suggests that the flow of the system will be entirely
described by the hypereliptic curve $\C_{h}$ and its generalized
Jacobian. We will also show that the arithmetic genus $k+r-1$ gives
the dimension of the invariant tori we can expect for the Neumann
system. 

The desingularisation of the hypereliptic curve $\C_{h}$ is given
by the equation 
\[
w^{2}=\left(\lambda\prod_{i=1}^{k}(z-a_{i})\right)^{2}=Q_{\q,\p}(z)\prod_{i,m_{i}=1}(z-a_{i})
\]
and is of genus $g=[k-\frac{1}{2}m]-1$. We denote it by $\tilde{\C_{h}}$. 
\begin{remark}
It has been shown that the normalized spectral curve $\tilde{\C_{h}}$
can be used to describe the dynamics of the Rosochatius system, which
can be seen as a reduction of degenerate Neumann system by $SO(m_{j})$
symmetries \cite{Vuk:JPA:2008}. We will show later in this paper
that the singular hypereliptic curve $\C_{h}$ describes the dynamics
of the degenerate Neumann system and that the set of parabolas $\mu=a_{j}\lambda^{2}$
accounts for the loss of dimensions in the invariant tori as a result
of the superintegrability of the system. 
\end{remark}

\section{Invariants of the flow\label{sec:Invariants-of-the}}

We have seen, that the isospectral nature of the flow (\ref{eq:Lax})
reveals the first integrals of the system encoded as coefficients
in the hypereliptic part of the spectral curve. We have also observed,
that not all of the first integrals can be expressed in terms of the
spectral data. Next we would like to describe the invariant sets of
the isospectral flow (\ref{eq:Lax}) and hence the invariant tori
of the Neumann system. 

If an integrable system is not superintegrable, then the generic invariant
sets of its flow are Arnold-Liouville tori. In the case of superintegrability,
the generic invariant sets are still tori, but of lower dimension.
We will show, that invariant tori of the degenerate Neumann system
can be obtained naturally as real parts of the generalized Jacobian
of hypereliptic part $\C_{h}$ of the spectral curve. 

Both the characteristic polynomial as well as the spectral curve are
invariants of the flow, so the trajectories of (\ref{eq:Lax}) lie
inside the subspace of all the matrix polynomials $\LL(\lambda)$
with the same spectral polynomial (or in fact the same spectral curve).
\[
\M_{\C}=\{\LL(\lambda)=\A\lambda^{2}+{\bf B}\lambda+{\bf C};\quad\text{spectral curve of }\LL(\lambda)\text{ is }\C\}.
\]
The set $\M_{\C}$ is way too large to be the minimal invariant set.
We therefore look for additional invariants. The flow of \ref{eq:Lax}
conserves the leading term $\A$, so the set 
\[
\M_{\C,\A}=\{\LL(\lambda);\quad\LL\in\M_{\C}\text{ and }\LL(\infty)=\A\}
\]
is also an invariant of the flow. The set $\M_{\C,\A}$ is too large
to be minimal invariant torus. For the submatrices of $\LL(\lambda)$,
where $\A$ has a non-regular block, the second highest power of $\lambda$
is also preserved by the flow. Let us denote by $\K_{j}$ the sub-matrix
of ${\bf B}$ that correspond to eigenspace of $\A$ for eigenvalue
$a_{j}$ with multiplicity $m_{j}>1$. In case of the degenerate Neumann
system the matrix $\K_{j}$ is an antisymmetric matrix whose entries
are 
\[
k_{ik}=q_{v_{j}(i)}p_{v_{j}(k)}-q_{v_{j}(k)}p_{v_{j}(i)}
\]
for indexes $v_{j}(i),v_{j}(k)\in V_{j}$. The entries $k_{ik}$ are
angular momenta in $q{}_{v_{j}(i)},q_{v_{j}(k)}$ plane and are the
components of the momentum map for the $O(m_{j})$ action on the eigenspace
of $\A$ corresponding to eigenvalue $a_{j}$. In fact the block $\K_{j}\in\mathfrak{so}(m_{j})$
is the value of the momentum map itself. Since the Hamiltonian $H$
is preserved by those actions, the blocks $\K_{j}$ are also a conserved
quantity. The isospectral flow (\ref{eq:Lax}) will therefore fix
the blocks $\K_{j}$ for all the eigenvalues $a_{j}$ with multiplicity
grater than $1$. We can then restrict ourselves to the isospectral
set
\[
\M_{\C,\A,\K_{j}}=\{\LL(\lambda);\quad\LL(\lambda)\in\M_{\C},\A,\K_{j}\text{ are fixed}\}.
\]
By the result in \cite{Vivolo:EdMathSoc:2000} the isospectral manifold
$\M_{\C,\A,\K_{j}}$ is isomorphic as an algebraic manifold to the
Zariski open subset $J(\C_{m})-\Theta$ of the generalized Jacobian
of the singularization of the spectral curve $\C$, given by a modulus
\[
\m=\sum_{P\in\lambda^{-1}(\infty)}P
\]
on the normalization $\tilde{C}$ of the spectral curve. Furthermore
the following diagram commutes
\[
\xymatrix{0\ar[r] & \PG_{\A,\K_{j}}\ar[r]\ar[d] & \M_{\C,\A,\K_{j}}\ar[r]\ar[d]_{e_{m}} & \M_{\C,\A,\K_{j}}/\PG_{\A,\K_{j}}\ar[r]\ar[d]_{e} & 0\\
0\ar[r] & (\CC^{*})^{n}\ar[r] & \jac(\C_{\m})-\Theta\ar[r] & \jac(\tilde{\C})-\Theta\ar[r] & 0
}
\]
where the group $\PG_{\A,\K_{j}}$ is the subgroup of the projective
general linear group that leaves the matrix $\A$ and the sub-matrices
$\K_{j}$ invariant by conjugation. If $\K_{j}$ are regular, the
group $\PG_{\A,\K_{j}}$ is isomorphic to a complex torus $(\CC^{*})^{n}$,
since both $\A$ and $\K_{j}$ can be diagonalized. The stabilizer
$G_{\A,\K_{j}}$ is a product 
\[
G_{\A,\K_{j}}=\TT_{\A}\times\prod_{m_{j}\ge2}{\rm \PP stab\K_{j}}
\]
where $\TT_{\A}$ consists of diagonal matrices $[t_{ij}]$ such that
$t_{ll}=t_{kk}$ if the indexes $k$ and $l$ correspond to the same
eigenvalue $a_{j}$ of $\A$ ($k,l\in v_{j})$. The group of ${\rm stab}\K_{j}$
consist of all the matrices, that can be diagonalized with the same
coordinate change as $\K_{j}$. The subgroup $\TT_{j}=\left\{ \exp(t\K_{j});t\in\CC\right\} $
generated by $\K_{j}$ is a subgroup of the stabilizer ${\rm stab}\K_{j}$.
Denote by $\TT_{\A,\K_{j}}$ the group
\[
\TT_{\A,\K_{j}}=\TT_{\A}\times\prod_{m_{j}>2}\left(\PP{\rm stab}\K_{j}/\TT_{j}\right).
\]
The orbits of the group $\TT_{\A,\K_{j}}$ are transversal to the
isospectral flow (\ref{eq:Lax}), so the right space to consider as
the invariant set is the quotient $\M_{\C,\A,\K_{j}}/\TT_{\A,\K_{j}}$. 

On the other hand we have seen from lemma \ref{lem:The-solution-of},
that the set
\[
\mathcal{P}_{\A}=\{\A\lambda^{2}+\lambda\p\wedge\q-\q\otimes\q;\quad\q,\p\in\CC^{n+1}\times\CC^{n+1}\}
\]
is also invariant with respect to the flow (\ref{eq:Lax}). The map
$(\p,\q)\mapsto\A\lambda^{2}+\lambda\p\wedge\q-\q\otimes\q$ gives
a parametrization of the complexified phase space of the Neumann system
into the set of matrix polynomials. The appropriate invariant set
would therefore be the intersection $\mathcal{P}_{\A}\cap\M_{\C,\A,\K_{j}}$,
which we denote by $\M_{N}$. The following theorem describes the
relation among $\M_{N}$ and $\M_{\C,\A,\K_{j}}/\TT_{\A,\K_{j}}$.
\begin{theorem}
\label{thm:main} For any initial condition of the Neumann
system such that the normalized hypereliptic part $\tilde{\C}_{h}$
of the spectral curve is smooth the following holds. 
\begin{enumerate}
\item If $\K_{j}$ is regular, the quotient of $\M_{N}$ by a discrete group
is isomorphic as an algebraic variety to the open subset of the generalized
Jacobian $\jac(\C_{h})-\Theta\simeq\M_{\C,\A,\K_{j}}/\TT_{\A,\K_{j}}$
of the hypereliptic spectral curve $\C_{h}$.
\item If $\K_{j}=0$ ($K_{j}=0$) the set $\M_{N}$ is isomorphic as an
algebraic variety to the open subset $\jac(\tilde{\C}_{h})-\Theta\simeq\M_{\C,\A,\K_{j}}/\PG_{\A,\K_{j}}$
of the Jacobian of the desingularized hypereliptic curve for $\tilde{\C}_{h}$.
\item The flow generated by the Hamiltonian $H$ is linear on the generalized
Jacobian $\jac(\C_{h})$ if $\K_{j}$ regular or Jacobian $\jac(\tilde{\C_{h}})$
if $\K_{j}=0$.\label{enu:H-linear}
\end{enumerate}
\end{theorem}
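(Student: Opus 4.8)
The plan is to combine the structural description of the isospectral set $\M_{\C,\A,\K_j}$ already established (via the results of \cite{Vivolo:EdMathSoc:2000} cited above, which identify it with the open subset $\jac(\C_\m)-\Theta$ of the generalized Jacobian through the eigenvector map $e_\m$) with the explicit parametrization $\mathcal{P}_\A$ coming from Lemma \ref{lem:The-solution-of}, and then read off the flow of $H$. First I would work out the image of $\M_N = \mathcal{P}_\A \cap \M_{\C,\A,\K_j}$ under the eigenvector map. The key observation is that the modulus $\m = \sum_{P\in\lambda^{-1}(\infty)} P$ is supported entirely over $\lambda=\infty$, and at infinity the Lax matrix $\lambda^{-2}\LL(\lambda) = \A + \lambda^{-1}\q\wedge\p - \lambda^{-2}\q\otimes\q$ is a rank-two perturbation of $\A$; hence the local behaviour of its eigenvectors near each $P$ over $\infty$ is controlled by the two vectors $\q,\p$ restricted to the relevant eigenspace of $\A$. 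I would show that fixing the blocks $\K_j$ is exactly the condition that collapses the $(\CC^*)^n$-fibre of $\jac(\C_\m)-\Theta\to\jac(\tilde\C)-\Theta$ down to the part of the torus that $\M_N$ actually meets, so that $\M_N$ surjects onto $\jac(\C_h)-\Theta$ with discrete fibres (the discreteness coming from the residual sign/permutation ambiguities in reconstructing $(\q,\p)$ from an eigenvector line, e.g.\ the $\pm$ choices of the norms $\|\q\|_i,\|\p\|_i$ once the $\langle\q,\p\rangle_i$ are fixed). This gives statement (1); for statement (2), when $\K_j = 0$ the quadratic terms $K_j^2/(z-a_j)^2$ drop out of $\Phi_z$, the singular points of $\C_h$ over $z=a_j$ disappear from the relevant data, the $(\CC^*)^n$ extension becomes trivial on the piece met by $\M_N$, and $\M_N$ maps isomorphically (no discrete quotient needed because the vanishing of the angular momenta removes the rotational ambiguity) onto $\jac(\tilde\C_h)-\Theta$.

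For part \ref{enu:H-linear}, I would argue as follows. By the standard theory of Lax flows of the form \eqref{eq:Lax} — the Adler--Kostant--Symes / Griffiths mechanism, as used in \cite{Beauville:Acta:1990,Gavrilov:MathZft:2002} — any flow $\dot\LL = [\LL, M(\lambda)]$ with $M$ a polynomial in $\lambda$ whose eigenvalues depend only on the spectral invariants is mapped by the eigenvector map to a \emph{linear} flow on (the generalized) Jacobian, the direction of the straight line being determined by the principal parts of $M$ at the points of the modulus together with its values at the branch/marked points. So it suffices to check that the Hamiltonian vector field of $H$, transported to $\M_N$ via the parametrization $\mathcal{P}_\A$, coincides with the restriction of the Lax flow \eqref{eq:Lax} (this is precisely the content of Lemma \ref{lem:The-solution-of}, which says the Neumann equations \eqref{eq:hamiltonian-system} are equivalent to \eqref{eq:Lax} on $\mathcal{P}_\A$), and then invoke linearization on $\jac(\C_\m)$ and push it down through the commuting diagram to $\jac(\C_h)$ (resp.\ to $\jac(\tilde\C_h)$ when $\K_j=0$). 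Since the isomorphisms of parts (1) and (2) are isomorphisms of algebraic varieties intertwining the two flows up to the discrete group, linearity on the target follows.

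The main obstacle I expect is part (1): showing that the eigenvector map restricted to $\M_N$ has \emph{discrete} — and not merely finite-dimensional positive — fibres and is \emph{surjective} onto the full open subset $\jac(\C_h)-\Theta$, i.e.\ that no further first integrals beyond $\C$, $\A$ and the $\K_j$ constrain $\M_N$ inside $\M_{\C,\A,\K_j}$. Concretely one must verify a dimension count: $\dim_\CC \M_N$ should equal $g_a(\C_h) = k+r-1$, matching $\dim_\CC \jac(\C_h)$. Here the smoothness hypothesis on $\tilde\C_h$ is essential — it guarantees the partial normalization $\C_h\to\tilde\C_h$ has exactly the nodal/cuspidal singularities accounted for by the modulus $\m$, so that the arithmetic-genus bookkeeping $g_a = g + (\text{number of }\CC^*\text{ factors retained})$ is exact and the generalized Jacobian has the expected dimension. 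I would handle surjectivity by constructing, for a generic line bundle (or generalized divisor class) on $\C_h$ away from $\Theta$, an explicit $\LL(\lambda)$ of the form \eqref{eq:Lax_form} realizing it — essentially inverting the Weinstein--Aronszajn computation that produced $Q_{\q,\p}(z)$ — and checking that the reconstructed $\q,\p$ satisfy $\|\q\|=1$, $\langle\q,\p\rangle=0$; the finitely many choices in this reconstruction are exactly the discrete group in the statement.
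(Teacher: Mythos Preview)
Your overall architecture matches the paper's: use Vivolo's identification $\M_{\C,\A,\K_j}\simeq\jac(\C_\m)-\Theta$, pass to the quotient by the appropriate torus, and show $\M_N$ sits inside as a finite cover; then invoke the standard linearization of Lax flows for part (\ref{enu:H-linear}). Part (\ref{enu:H-linear}) in particular is handled exactly as the paper does it.

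The genuine difference is in how the ``main obstacle'' you flag for part (1) is resolved. You propose to analyse the eigenvector map near $\lambda=\infty$, carry out a dimension count $\dim_\CC\M_N=g_a(\C_h)$, and then invert the Weinstein--Aronszajn computation to reconstruct $(\q,\p)$ and exhibit surjectivity. The paper bypasses all of this with a short group-theoretic transversality argument: an element $g\in\PG l(n+1)$ acts on a rank-one tensor by $x\otimes y\mapsto(gx)\otimes((g^{-1})^T y)$, so the special form $\q\otimes\q$ of the constant term in (\ref{eq:Lax_form}) is preserved precisely when $g\in O(n+1,\CC)$. Hence the subgroup of $\PG_{\A,\K_j}$ whose orbits stay inside the image of $J^\A$ is $D=O(n+1,\CC)\cap\PG_{\A,\K_j}\simeq(\ZZ_2)^{n-r-1}\times\prod_{m_j>2}(\TT_j\cap O(m_j,\CC))$, and since $\TT_{\A,\K_j}\cap\TT_j=\{Id\}$ the $\TT_{\A,\K_j}$-orbits meet $\M_N$ only in the orbit of the finite group $(\ZZ_2)^{n-r-1}$. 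This single observation simultaneously gives the discreteness of the fibres, identifies the discrete group explicitly, and (since $\TT_{\A,\K_j}$ acts freely on the open set) makes the surjectivity onto $\jac(\C_h)-\Theta$ automatic without any reconstruction. Your route would work in principle, but the obstacle you correctly anticipate is exactly the step the paper's argument dissolves; you may want to replace the eigenvector/reconstruction plan with this tensor-action observation.
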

\begin{remark}
The intermediate cases where $K_{j}\not=0$ and $\K_{j}$ are non-regular
appears for dimensions greater than $3$ and will not be covered in
this article. \end{remark}
\begin{proof}
We can define the eigenbundle map from the isospectral set $\M_{\C,\A,\K_{j}}$
to $\jac(\C_{h})$ 
\[
e:\M_{\C,\A,\K_{j}}\to\jac(\C_{h})
\]
by normalizing eigenline bundles on $\tilde{\C}_{h}$ as in \cite{Gavrilov:MathZft:2002,Vivolo:EdMathSoc:2000,Vuk:JPA:2008}.
The set $\M_{\C,\A,\K_{j}}$is isomorphic to an open subset of the
generalized Jacobian of the singular spectral curve $\C_{\m}$, defined
by the modulus 
\[
\m=\sum_{P\in\lambda^{-1}(\infty)}P
\]
on the reducible normalized spectral curve $\tilde{C}$ \cite{Vivolo:EdMathSoc:2000}.
We also see, that the following diagram commutes 
\[
\xymatrix{\M_{\C,\A,\K_{j}}\ar[r]\ar[d]_{e_{m}} & \M_{\C,\A,\K_{j}}/\TT_{\A,\K_{j}}\ar[r]\ar[d]_{e} & \M_{\C,\A,\K_{j}}/\PG_{\A,\K_{j}}\ar[d]_{\tilde{e}}\\
\jac(\C_{\m})-\Theta_{\m}\ar[r] & \jac(\C_{h})-\Theta_{h}\ar[r] & \jac(\tilde{\C_{h}})-\tilde{\Theta_{h}}
}
\]
where $\TT_{\A}=\PG_{\A,\K_{j}}/\prod_{m_{j}\ge2}\TT_{j}$ and $\TT_{j}=\left\{ \exp(t\K_{j})\right\} $
is a subgroup generated by the elements $\K_{j}\in\mathfrak{so}(m_{j})<\mathfrak{so}(n+1)$.

We have to prove, that the orbit of $\TT_{\A}$ $\M_{\C,\A,\K_{j}}/\TT_{\A}$
intersects the set $\M_{N}$ in finitely many points. 

Since $\K_{j}$ are anti-symmetric all of its eigenvalues are pure
imaginary and are of multiplicity one if $\K_{j}$ is a regular matrix,
the group $\PG_{\A,\K_{j}}$ is isomorphic to the torus $\TT_{\CC}^{r}=\left(\CC^{*}\right)^{n}$.
We only have to prove that the set $\M_{N}$ consisting of matrices
of the form (\ref{eq:Lax_form}) is a covering of the quotient $\M_{\C,\A,\K_{j}}/\TT_{\A,\K_{j}}$.
Or in other words (\ref{eq:Lax_form}) gives a parametrization of
the quotient $\M_{\C,\A,\K_{j}}/\TT_{\A,\K_{j}}$ by $(\q,\p)\in F^{-1}(f)\subset(\tst^{n})^{\CC}$.
First note that the map 
\[
J^{\A}:\M_{N}\to\M_{\C,\A,\K_{j}}
\]
 defined by (\ref{eq:Lax_form}) is an immersion. We will show that
any orbit of $\TT_{\A}$ intersects the image of $J^{\A}$ only in
finite number of points. To explain how $\PG_{\A,\K_{j}}$ acts on
the Lax matrix (\ref{eq:Lax_form}), note that an element $g\in\PG l(n+1)$
acts on a tensor product $x\otimes y$ of $x,y\in\CC^{n+1}$ 
\[
g:x\otimes y\mapsto(gx)\otimes((g^{-1})^{T}y)
\]
 by multiplying the first factor with $g$ and the second with $(g^{-1})^{T}$.
The subgroup of $\PG_{\A,\K_{j}}$, for which the generic orbit lies
in the image of $J^{\A}$, is given by orthogonal matrices 
\[
D:=O(n,\CC)\cap\PG_{\A,\K_{j}}\simeq(\ZZ_{2})^{n-r-1}\times\prod_{m_{j}>2}(\TT_{j}\cap O(m_{j},\CC)).
\]
 There are special points in the image of $J^{\A}$ that have a large
isotropy group (take for example $q_{i}=\delta_{ij}$ and $p_{i}=\delta_{ik}$,
$k\not=j$, where the isotropy is $(\CC^{*})^{n-2}$). But the intersection
of any orbit with the image of $J^{\A}$ coincides with the orbit
of $D$. If we take the torus $\TT_{\A,\K_{j}}<\PG_{\A,\K_{j}}$ the
orbits of $\TT_{\A,\K_{j}}$ will intersect image of $J^{\A}$ only
in the orbit of the finite subgroup $(\ZZ_{2})^{n-r-1}$, since $\TT_{\A,\K_{j}}\cap\TT_{j}=\{Id\}$.
We have proved that the level set of Lax matrices $\M_{N}=\left\{ \LL(\lambda)=J^{\A}(\q,\p)\right\} $
is an immersed sub-manifold in $\M_{\C,\A,\K_{j}}$ that intersects
the orbits of torus $\TT_{\A,\K_{j}}\simeq(\CC^{*})^{n-1}$ in only
finite number of points and is thus a covering of the quotient $\M_{\C,\A,\K_{j}}/\TT_{\A,\K_{j}}$.

To prove the assertion (\ref{enu:H-linear}), note that the matrix
polynomial $\mathbf{M}(\lambda)$ in the Lax equation (\ref{eq:Lax})
is given as a polynomial part $R(\lambda,\LL(\lambda))_{+}$ for a
polynomial $R(z,w)=zw$ and such isospectral flows are mapped by $e$
to linear flows on the Jacobian $\jac(\C)$ (see \cite{audin:CUP:1999}
for reference). 
\end{proof}
Taking into account the real structure on $\C$, the invariant tori
on the Hamiltonian flow generated by $H$ can be described as a real
part of the generalized Jacobian.
\begin{theorem}
\label{thm:real-ext}For any initial condition of the Neumann
system such that $\K_{j}$ are regular and $\tilde{\C}_{h}$ smooth
the following holds.
\begin{enumerate}
\item the invariant tori of the flow of $H$ are $(\ZZ_{2})^{n-r-1}$
coverings of the real part of the generalized Jacobian. 
\item The rotations generated by total angular momenta $K_{j}^{2}$
are precisely the rotations of the factors $S^{1}$ in the fiber $\mathbb{T}^{r}$
in the fibration $\mathbb{T}^{r}\to\jac(\C)^{\RR}\to\jac(\tilde{\C})^{\RR}$,
which is the real part of the fibration $(\CC^{*})^{r}\to\jac(\C_{m})\to\jac(\C)$.
\end{enumerate}
\end{theorem}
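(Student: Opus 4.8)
The plan is to obtain the two assertions from the complex picture of Theorem \ref{thm:main} by equipping every object of its commutative diagram with the antiholomorphic involution given by complex conjugation and passing to fixed points. Since $\A$ is real and, by Lemma \ref{lem:The-solution-of}, the flow (\ref{eq:Lax}) preserves the real locus $\{\q,\p\in\RR^{n+1}\}$ of $\mathcal{P}_{\A}$, the spectral polynomial $\det(\LL(\lambda)-\mu)$ has real coefficients for the (real) initial conditions of the Neumann system, so $\sigma\colon(\lambda,\mu)\mapsto(\overline\lambda,\overline\mu)$ is an antiholomorphic involution of $\C$; it restricts to $\C_{h}$, lifts to the normalisation $\tilde{\C}_{h}$, and fixes the modulus $\m=\sum_{P\in\lambda^{-1}(\infty)}P$ because $\lambda^{-1}(\infty)$ is $\sigma$-stable. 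By functoriality $\sigma$ acts on $\jac(\C_{\m})$, $\jac(\C_{h})$ and $\jac(\tilde{\C}_{h})$ compatibly with the extensions, and the eigenbundle maps $e_{m},e,\tilde e$ of the proof of Theorem \ref{thm:main} are $\sigma$-equivariant, since the eigenline bundle of $\overline{\LL(\overline\lambda)}$ over $\sigma(\lambda,\mu)$ is the $\sigma$-conjugate of the eigenline bundle of $\LL(\lambda)$ over $(\lambda,\mu)$. Under (\ref{eq:Lax_form}) the real phase space $F^{-1}(f)\subset T^{*}S^{n}$ is exactly the real locus $\M_{N}^{\RR}$ of $\M_{N}$, so taking $\sigma$-fixed points turns the diagram of the proof of Theorem \ref{thm:main} into a diagram of real-analytic spaces with $F^{-1}(f)$ in the upper-left corner.

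For part (1), observe that $F^{-1}(f)$ is compact: $\|\q\|=1$ bounds $\q$ and $H=\tfrac12\|\p\|^{2}+V(\q)$ bounds $\|\p\|$ on the level set. Restricting the upper row of the diagram to real points, each connected component of $F^{-1}(f)$ maps onto a connected component $T$ of $\jac(\C_{h})^{\RR}$, and, as established in the proof of Theorem \ref{thm:main}, the generic fibre of this map is a free orbit of the discrete subgroup $D\cap\TT_{\A,\K_{j}}\simeq(\ZZ_{2})^{n-r-1}$. By Theorem \ref{thm:main}(\ref{enu:H-linear}) the flow of $H$ is linear on $\jac(\C_{h})$, hence restricts to a linear translation flow on $\jac(\C_{h})^{\RR}$; since $T$ is the image of a compact set it is a compact connected commutative real Lie group, that is, a real torus. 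Therefore each invariant torus of $H$ is a $(\ZZ_{2})^{n-r-1}$-covering of a component of the real part of the generalized Jacobian.

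For part (2), recall that the block $\K_{j}\in\mathfrak{so}(m_{j})$ is the value of the momentum map for the $SO(m_{j})$ action on the eigenspace $V_{j}$, that this action is conjugation of (\ref{eq:Lax_form}) by the corresponding orthogonal block, and that the one-parameter group it determines through $\K_{j}$ is $\TT_{j}=\{\exp(t\K_{j})\}$. On the other hand $K_{j}^{2}=-\tfrac12\,\mathrm{tr}\,\K_{j}^{2}$ has Hamiltonian flow $\q\mapsto\exp(2t\K_{j})\q$, i.e. the action of $\TT_{j}$ up to reparametrisation. Under the eigenbundle map, translation by $\TT_{j}$ becomes translation along the $j$-th factor of the torus $(\CC^{*})^{r}$ occurring in the generalized Jacobian $\jac(\C_{\m})$ — the kernel of $\jac(\C_{\m})\to\jac(\C)$ in the notation of the theorem — because $\TT_{j}$ is precisely the part of $\PG_{\A,\K_{j}}$ not already contained in $\TT_{\A,\K_{j}}$ (recall $\TT_{\A,\K_{j}}\cap\TT_{j}=\{Id\}$). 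Passing to $\sigma$-fixed points and using that $\K_{j}$ is antisymmetric, so that $\exp(t\K_{j})$ is orthogonal for real $t$, the real form of this $j$-th $\CC^{*}$ is the circle $\{\exp(t\K_{j}):t\in\RR\}\simeq S^{1}$. Hence the $r$ commuting flows of the $K_{j}^{2}$, one for each eigenvalue of multiplicity $>1$, are precisely the rotations of the $r$ circle factors of the fibre $\TT^{r}$ of $\TT^{r}\to\jac(\C)^{\RR}\to\jac(\tilde{\C})^{\RR}$, the real part of $(\CC^{*})^{r}\to\jac(\C_{\m})\to\jac(\C)$, while the flow they induce on the base is the flow of the Rosochatius reduction.

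The main obstacle is the real-structure bookkeeping that supports both parts: one must check that the $\sigma$-fixed locus of $\jac(\C_{h})$ is nonempty and that, on the component met by $F^{-1}(f)$, the extension $0\to(\CC^{*})^{r}\to\jac(\C_{h})\to\jac(\tilde{\C}_{h})\to0$ restricts to the compact real form $(S^{1})^{r}$ of the fibre torus rather than to $(\RR^{*})^{r}$. This is exactly where the hypotheses enter — reality of the potential, smoothness of $\tilde{\C}_{h}$ (which controls the real branch points of $\tilde{\C}_{h}$ and hence the existence of a full-dimensional component of $\jac(\tilde{\C}_{h})^{\RR}$), and regularity and antisymmetry of the $\K_{j}$ (purely imaginary spectrum, orthogonal $\exp(t\K_{j})$). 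Once the compatibility of $\sigma$ with the extension class is pinned down on the relevant real component, the remaining assertions follow from Theorem \ref{thm:main} together with the elementary fact that a compact connected commutative real Lie group is a torus.
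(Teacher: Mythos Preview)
Your argument is essentially correct and follows the same overall strategy as the paper: equip everything with the antiholomorphic involution coming from complex conjugation, take fixed points, and use the antisymmetry of the $\K_{j}$ to force the real fibre to be the compact form $(S^{1})^{r}$. The paper's proof is very short and organised slightly differently, and the one substantive difference is worth noting.

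For part (2) you argue on the \emph{group} side: $\K_{j}$ antisymmetric $\Rightarrow$ $\exp(t\K_{j})$ orthogonal for real $t$ $\Rightarrow$ the real $\TT_{j}$-orbit is a circle, hence under the equivariant eigenbundle map the real part of the $j$-th $\CC^{*}$ factor is $S^{1}$. The paper argues on the \emph{curve} side: the singular point $S_{j}\in\C_{h}$ lifts to two points $P_{j}^{\pm}\in\tilde{\C}_{h}$, and because the eigenvalues of $\K_{j}$ are purely imaginary these two preimages satisfy $P_{j}^{+}=\sigma(P_{j}^{-})$; then, citing \cite{audin:CMP:2002}, the induced real structure on the corresponding $\CC^{*}$ factor of the extension is $z_{j}\mapsto 1/\bar z_{j}$, whose fixed set is $|z_{j}|=1$. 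Both routes start from the same hypothesis (antisymmetry/regularity of $\K_{j}$) and reach the same conclusion, but the paper's route pins down \emph{why} the extension class is compatible with the compact real form---precisely the ``main obstacle'' you flag in your last paragraph. Your compactness argument for part (1), deducing that the relevant component of $\jac(\C_{h})^{\RR}$ is a torus from compactness of $F^{-1}(f)$, is a nice addition the paper does not make explicit.
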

\begin{proof}
The groups $\TT_{j}$ defined in the proof of theorem \ref{thm:main}
are the complexification of the group of rotations in the eigenspace
of $\A$ for the eigenvalue $a_{j}$ that are generated by the Hamiltonian
vector fields of $K_{j}^{2}$. The flows of the total angular momenta
$K_{j}$ therefore generate the fibers $\CC^{*}$ in the extension
$(\CC^{*})^{r}\to\jac(\C_{h})\to\jac(\tilde{\C}_{h})$. 

To prove the claim about the real part, let us denote by $S_{j}$
singular points of $\C_{h}$ corresponding to the eigenvalue $a_{j}$
of the potential $\A$ with multiplicity $>1$. Since we assumed $\K_{j}$
is regular, two points $P_{j}^{+}$and $P_{j}^{-}$ on the smooth
curve $\tilde{\C_{h}}$ map to $S_{j}$. Since the eigenvalues of
$\K_{j}$ are pure imaginary, the value of $\mu$ at the points $P_{j}^{\pm}$
is also pure imaginary. On $\C_{h}$ and $\tilde{\C_{h}}$ there is
a natural real structure $J$ induced by the conjugation of $(\lambda,\mu)\in\CC^{2}$.
The points $P_{j}^{\pm}$ that are glued in the singular point $S_{j}$
form a conjugate pair $P_{j}^{+}=JP_{j}^{-}$. If we follow the argument
in \cite{audin:CMP:2002} we can find that the real structure of the
fiber $(\CC^{*})^{r}$ in the extension $(\CC^{*})^{r}\to\jac(\C_{h})\to\jac(\tilde{\C_{h}})$
is given by the maps 
\[
z_{j}\mapsto\frac{1}{\bar{z}_{j}}
\]
 and that the real part of $(\CC^{*})^{r}$ is the torus $(S^{1})^{r}$
given by $z_{j}\bar{z}_{j}=1$. 

\end{proof}
The above theorems can be used to describe the structure of the degenerate
Neumann system as a superintegrable system. The set of local actions
that generates the invariant tori of the Neumann system depends only
on the spectral invariants $F_{1},\ldots,F_{[k-\frac{r}{2}]-1},K_{1}^{2},\ldots,K_{r}^{2}$,
that come from the hypereliptic part of the spectral curve. Apart
from the spectral invariants, there is another set of first integrals
given by the functions of matrices $\K_{j}\in\mathfrak{so}(m_{j})$
that are not the functions of the total angular momenta $K_{j}^{2}$.
The latter first integrals correspond to $O(m_{j})$ symmetries of
the degenerate Neumann system and their Hamiltonian flows are transversal
to the invariant tori. 
\begin{acknowledgements}
I would like to thank to the \emph{Institut de Mathématiques de Toulouse,
}where part of this work has been done. I would also like to thank
Pavle Saksida for carefully reading the manuscript and for his comments,
suggestions and support. This research was partly financed by the
\emph{Slovenian Research Agency (ARRS)}, J1-2152.
\end{acknowledgements}
\bibliography{martin}

\begin{thebibliography}{10}
\providecommand{\url}[1]{{#1}}
\providecommand{\urlprefix}{URL }
\expandafter\ifx\csname urlstyle\endcsname\relax
  \providecommand{\doi}[1]{DOI~\discretionary{}{}{}#1}\else
  \providecommand{\doi}{DOI~\discretionary{}{}{}\begingroup
  \urlstyle{rm}\Url}\fi
\bibitem{Moser:CSY:1980}
Moser, J.: {Geometry of quadrics and spectral theory}.
\newblock In: The Chern Symposium 1979 (Proc. Internat. Sympos., Berkeley,
  Calif., 1979), pp. 147--188. Springer, New York (1980)

\bibitem{Adler:AdvM:1980}
Adler, M., van Moerbeke, P.: {Linearization of Hamiltonian systems, Jacobi
  varieties and representation theory.}
\newblock Adv. Math. \textbf{38}, 318--379 (1980)

\bibitem{Mumford:TataII:1984}
Mumford, D.: {Tata lectures on theta. II: Jacobian theta functions and
  differential equations.}
\newblock Progress in Mathematics, Vol. 43. Boston-Basel-Stuttgart:
  Birkh\"{a}user. XIV, 272 p. (1984)

\bibitem{Liu1992}
Liu, Z.: {A note on the C. Neumann problem}.
\newblock Acta Mathematicae Applicatae Sinica \textbf{8}(1), 1--5 (1992).



\bibitem{Vuk:JPA:2008}
Vuk, M.: {Algebraic integrability of confluent Neumann system}.
\newblock J. Phys. A: Math. Gen. \textbf{41}(37), 395,201/1--16 (2008)


\bibitem{Dullin2012}
Dullin, H.R., Han{\ss}mann, H.: {The degenerate C. Neumann system I: symmetry
  reduction and convexity}.
\newblock Central European Journal of Mathematics \textbf{10}(5), 36 (2012).

\bibitem{Gavrilov:MathZft:2002}
Gavrilov, L.: {Jacobians of singularized spectral curves and completely
  integrable systems}.
\newblock Mathematische Zeitschrift \textbf{230}(3), 20 (2001).

\bibitem{Vivolo:EdMathSoc:2000}
Vivolo, O.: {Jacobians of singular spectral curves and completely integrable
  systems.}
\newblock Proc. Edinb. Math. Soc., II. Ser. \textbf{43}(3), 605--623 (2000)

\bibitem{Serre:Springer:ClassFields}
Serre, J.P.: {Algebraic Groups and Class Fields, Translation of the French
  Edition (Graduate Texts in Mathematics)}, 1 edn.
\newblock Springer (1987)

\bibitem{Macfarlane:NPB:1992}
Macfarlane, A.J.: {The quantum Neumann model with the potential of
  Rosochatius}.
\newblock Nucl. Phys. B \textbf{386}(2), 453--467 (1992)

\bibitem{Babelon:NPB:1992}
{O. Babelon}, M.T.: {Separation of variables for the classical and quantum
  Neumann model}.
\newblock Nucl. Phys. B \textbf{379}(1-2), 321--339 (1992)

\bibitem{Harnad:CMP:1995}
Harnad, J., Winternitz, P.: {Classical and quantum integrable systems in
  $\widetilde{\mathfrak{gl}}(2)\sp{+*}$
  and separation of variables.}
\newblock Commun. Math. Phys. \textbf{172}(2), 263--285 (1995)

\bibitem{audin:CUP:1999}
Audin, M.: {Spinning tops. A course on integrable systems. Paperback edition.}
\newblock Cambridge Studies in Advanced Mathematics. 51. Cambridge: Cambridge
  University Press. viii, 139 p. (1999)

\bibitem{Beauville:Acta:1990}
Beauville, A.: {Jacobiennes des curbes spectrales et syst\`{e}mes hamiltoniens
  compl\`{e}tement int\'{e}grables}.
\newblock Acta Math. \textbf{164}, 211--235 (1990)



\bibitem{audin:CMP:2002}
Audin, M.: Hamiltonian monodromy via picard-lefschetz theory.
\newblock Commun. Math. Phys. \textbf{229}(3), 459--489 (2002)

\end{thebibliography}
 
\end{document}